\numberwithin{figure}{section}
\numberwithin{equation}{section} 
\numberwithin{figure}{section} 
\theoremstyle{plain}
\newcommand{\sst}{s_*}
\newcommand{\sn}{s_0}
\newcommand{\ah}{a(H)}
\def\<{{\langle }}
\def\>{{\rangle }}
\def\<{{\langle }}
\def\>{{\rangle }}
\theoremstyle{plain}
\newtheorem{theorem}{Theorem}[section]
\newtheorem{proposition}{Proposition}[section]
\newtheorem{remark}{Remark}[section]
\newtheorem{lemma}{Lemma}[section]
\title[Embedded constant mean curvature hypertori]
{Embedded constant mean curvature hypertori\\  in the $2n$-sphere}
\author{Junqi Lai and  Guoxin Wei}
\address{Junqi Lai \\  School of Mathematical Sciences, South China Normal University,
510631, Guangzhou,  China, 2019021668@m.scnu.edu.cn}
\address{Guoxin Wei \\  School of Mathematical Sciences, South China Normal University,
510631, Guangzhou,  China, weiguoxin@tsinghua.org.cn}
\begin{document}

	\begin{abstract}
     Brendle \cite{B} proved Lawson \cite{L2} conjecture about minimal embedded torus in the round three-dimensional sphere. Carlotto and Schulz \cite{CS}  constructed a minimal embedded three-dimensional hypertorus in the round four-dimensional sphere and conjectured that their hypertorus is a unique minimal embedded  three-dimensional hypertorus in the round four-dimensional sphere. In this paper, we construct two different constant mean curvature embedded $(2n-1)$-dimensional hypertori (that is, topological type \(\mathbb{S}^{n-1} \times \mathbb{S}^{n-1} \times \mathbb{S}^1\)) which have the same negative mean curvature \(H\) in the round $2n$-dimensional sphere \(\mathbb{S}^{2n}(1)\) . 
	\end{abstract}
	\maketitle
	\footnotetext{ }
	
	\section{Introduction}
	A hypersurface in the unit sphere \(\mathbb{S}^{n+1}(1)\)
	is said to have constant mean curvature (CMC) if its mean curvature \(H\), defined as the trace of the second fundamental form, is constant. 
	CMC hypersurfaces generalize the concept of minimal hypersurfaces (where mean curvature \(H=0\)).
	The simplest examples of CMC hypersurfaces in the unit sphere \(\mathbb{S}^{n+1}(1)\) are  spheres \(\mathbb{S}^{n}(r)\) and Clifford tori \(\mathbb{S}^k(r)\times \mathbb{S}^{n-k}(\sqrt{1-r^2})\), where $0<r<1$. 

	For \(n=2\), Lawson \cite{L1} proved in 1970 that any closed surface of orientable topological type can be minimally embedded into the three-dimensional unit sphere, while for non-orientable types, there is always a minimal immersion except \(\mathbb{RP}^2\) (real projection plane).
	In the same year, Lawson \cite{L2} conjectured that the Clifford torus \(\mathbb{S}^1\left(\frac{\sqrt{2}}{2}\right) \times \mathbb{S}^1\left(\frac{\sqrt{2}}{2}\right)\) is the only compact embedded minimal surface in \(\mathbb{S}^3(1)\) of topological type \(\mathbb{S}^1 \times \mathbb{S}^1\). 
	By applying the maximum principle technique to a two-point function defined on the space of pairs of points, Brendle \cite{B} proved this conjecture affirmatively in 2013. 
	This technique turned out to be quite useful, see \cite{B1} for an overview of two-point functions and \cite{AL,DG,K,MM} for further interesting applications.

	For \(n=3\), Carlotto and Schulz \cite{CS} were able to construct a minimal embedding of three-dimensional hypertorus (that is, topological type \(\mathbb{S}^{1} \times \mathbb{S}^{1} \times \mathbb{S}^1\)) in \(\mathbb{S}^4(1)\).
	In fact, they proved:
	\begin{theorem}[Carlotto and Schulz]\label{thm:2025-03-20-1}
		For any \(2 \le n \in \mathbb{N}\), there exists a minimal embedding of \(\mathbb{S}^{n-1} \times \mathbb{S}^{n-1} \times \mathbb{S}^1\) in \(\mathbb{S}^{2n}(1)\).
	\end{theorem}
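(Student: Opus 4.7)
The plan is to exploit the $SO(n)\times SO(n)$-cohomogeneity-two action on $\mathbb{S}^{2n}(1)\subset\mathbb{R}^n\oplus\mathbb{R}^n\oplus\mathbb{R}$ to reduce the construction to a geodesic ODE on a two-dimensional weighted orbit space. An $SO(n)\times SO(n)$-invariant hypersurface of topological type $\mathbb{S}^{n-1}\times\mathbb{S}^{n-1}\times\mathbb{S}^1$ corresponds to a simple closed curve $\gamma$ in the open interior of
\[
\mathcal{Q}=\{(u,v,z)\in\mathbb{R}^3:u,v\ge 0,\ u^2+v^2+z^2=1\},
\]
and the induced area of its preimage is, up to a universal constant, $\int_\gamma u^{n-1}v^{n-1}\,ds_{\mathbb{S}^2}$. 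Consequently the preimage is minimal in $\mathbb{S}^{2n}(1)$ precisely when $\gamma$ is a geodesic of the conformally weighted metric $\tilde g=(u^{n-1}v^{n-1})^2\,g_{\mathbb{S}^2}$ on $\mathrm{int}(\mathcal{Q})$.

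I would then reduce further using the discrete symmetries $(u,v,z)\mapsto(v,u,z)$ and $(u,v,z)\mapsto(u,v,-z)$ and seek $\gamma$ invariant under this $\mathbb{Z}_2\times\mathbb{Z}_2$. In spherical coordinates $u=\sin\phi\cos\theta,\ v=\sin\phi\sin\theta,\ z=\cos\phi$ with $(\theta,\phi)\in[0,\pi/2]\times[0,\pi]$, the swap corresponds to $\theta\mapsto\pi/2-\theta$ and the $z$-reflection to $\phi\mapsto\pi-\phi$, with respective fixed sets the diagonal $\{\theta=\pi/4\}$ and the equator $\{\phi=\pi/2\}$. It then suffices to produce a single geodesic arc in the fundamental domain $\{0<\theta\le\pi/4,\ 0<\phi\le\pi/2\}$ emanating orthogonally from the diagonal at some height $\phi_0\in(0,\pi/2)$ and terminating orthogonally on the equator; the $\mathbb{Z}_2\times\mathbb{Z}_2$-action will stitch four such arcs into a smooth simple closed geodesic. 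I would establish existence by a shooting argument: launch the orthogonal geodesic from $(\pi/4,\phi_0)$, track the angle of incidence $\alpha(\phi_0)$ at its first intersection with $\{\phi=\pi/2\}$, and analyze the two degenerate limits $\phi_0\to 0^+$ (trajectory concentrated near the pole, where the factor $(\sin\phi)^{2(n-1)}$ forces strong bending) and $\phi_0\to(\pi/2)^-$ (trajectory nearly tangent to the equator). These limits should produce opposite signs of $\alpha(\phi_0)-\pi/2$, so continuity of the flow yields some $\phi_0^*$ with $\alpha(\phi_0^*)=\pi/2$, closing $\gamma$ up as required.

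Two verifications then complete the argument. First, the shooting trajectory must stay in $\mathrm{int}(\mathcal{Q})$ until it meets the equator: this follows from a barrier estimate using the degenerate factor $(\sin\theta\cos\theta)^{n-1}$, which in the completed conformal geometry produces an effective repulsion of geodesics away from the two coordinate boundary strata $\{u=0\}$ and $\{v=0\}$. Second, embeddedness of the closed hypersurface is equivalent to simplicity of $\gamma$ in $\mathrm{int}(\mathcal{Q})$, which follows from monotonicity of the chosen arc inside the fundamental domain. The main obstacle I anticipate is the shooting step itself: establishing continuous dependence of $\alpha(\phi_0)$ on $\phi_0$, rigorously identifying the asymptotic behavior at both endpoints of the parameter range, and ruling out that the trajectory wraps around or returns to the diagonal before first reaching the equator, all require a careful phase-plane analysis of a system whose conformal factor degenerates on $\partial\mathcal{Q}$. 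A secondary technical point is smoothness of the resulting hypersurface across the orbit structure of the $G$-action, but since $\gamma$ is to be produced entirely in the open interior of $\mathcal{Q}$, no singular orbit is crossed and smoothness is automatic.
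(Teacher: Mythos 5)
This is essentially the approach of Carlotto and Schulz, which the paper cites for Theorem \ref{thm:2025-03-20-1} rather than reproving: the $SO(n)\times SO(n)$-equivariant reduction to a profile curve on the quarter-sphere, the reformulation of minimality as the system \eqref{eq:3-11-1}--\eqref{eq:3-11-3} with $H=0$ (your weighted-geodesic formulation with conformal factor $(uv)^{n-1}=(\sin^2 r\sin\theta\cos\theta)^{n-1}$ is the same equation), and a shooting argument launching orthogonally from the diagonal $\theta=\pi/4$ and closing up via the reflection symmetries of Lemma \ref{lem:2025-03-21-1}. The steps you defer --- continuity of the shooting map, the two opposite terminal behaviors ($\alpha(\sst)=0$ versus $r(\sst)=\pi/2$), and the barrier estimates keeping the trajectory away from $\{\theta=0\}$ --- are precisely where all the work lies (compare Lemmas \ref{lem:3-11-1}--\ref{lem:3-12-1}, which carry out the analogous analysis for $H<0$), so your outline is a faithful, if unexecuted, match for the paper's route.
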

In the same paper, Carlotto and Schulz \cite{CS} conjectured the following:\\

\noindent{\it\bf Conjecture} (Carlotto and Schulz). {\it There exists, up to ambient isometry, a unique minimal embedding of the three-dimensional hypertorus in the round four-dimensional sphere \(\mathbb{S}^4(1)\)}.\\

	This conjecture can be regarded as a higher-dimensional counterpart of Lawson's conjecture. 
	For \(n=4\), they also constructed a minimal embedding of four-dimensional torus \(\mathbb{S}^{1} \times \mathbb{S}^{1} \times \mathbb{S}^1 \times \mathbb{S}^1\) in \(\mathbb{S}^5(1)\).
	However, for higher dimensions, their method fails to construct such embeddings.

Bobenko \cite{B0} gave explicit formulas for CMC tori and obtained a ``Weierstrass representation`` for CMC surfaces of finite-gap type in \(\mathbb{S}^3(1)\).  Pinkall and Sterling \cite{PS} conjectured that embedded CMC tori are surfaces of revolution.  Andrews and Li \cite{AL}  confirmed the conjecture and proved that any constant mean curvature embedded torus in \(\mathbb{S}^3(1)\) is axially symmetric. We note that the embeddedness assumption  is crucial in the results of Brendle \cite{B} and Andrews-Li \cite{AL}: Otsuki \cite{O} constructed a lot of rotationally symmetric immersed minimal tori in \(\mathbb{S}^3(1)\); Bobenko  \cite{B0} constructed an infinite family of non-rotationally symmetric immersed CMC tori in \(\mathbb{S}^3(1)\) (see also Brito-Leite \cite{BL}, Cheng-Lai-Wei \cite{CLW}, Wei-Cheng-Li \cite{WCL}, Perdomo \cite{P}, \cite{P1} ).

 For CMC  hypertorus (that is, topological type \(\mathbb{S}^{n-1} \times \mathbb{S}^{n-1} \times \mathbb{S}^1\)), Huang and Wei \cite{HW} extended the work of Carlotto and Schulz \cite{CS} about minimal cases to CMC hypersurfaces for all mean curvature \(H \ge 0\). That is, Huang and Wei \cite{HW} constructed compact immersed or embedded CMC $(2n-1)$-dimensional hypertori. In this paper, we consider CMC hypersurfaces for negative mean curvature and obtain the following result.
	\begin{theorem}\label{thm:2025-03-21-1}
		For any \(2 \le n \in \mathbb{N}\), there exists \(\varepsilon = \varepsilon(n)>0\) such that for any \(-\varepsilon < H <0\), there exist two CMC embeddings of \(\mathbb{S}^{n-1} \times \mathbb{S}^{n-1} \times \mathbb{S}^1\) into \(\mathbb{S}^{2n}(1)\) which have same mean curvature \(H\) and are not isometric to each other.
	\end{theorem}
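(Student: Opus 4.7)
The natural approach is to construct the hypertori using the rotationally symmetric ansatz
$$X(\xi,\eta,s) = \bigl(x_1(s)\,\xi,\ x_2(s)\,\eta,\ x_3(s)\bigr),\qquad \xi,\eta\in\mathbb{S}^{n-1}(1),$$
whose profile curve $\gamma(s) = (x_1(s),x_2(s),x_3(s))$ lies on the unit $2$-sphere in $\mathbb{R}^{3}$ with $x_1,x_2>0$. The $O(n)\times O(n)$ symmetry reduces the CMC equation to a second-order ODE for $\gamma$; the principal curvatures consist of two sphere-factor contributions, each of multiplicity $n-1$, together with the geodesic curvature of $\gamma$ on $\mathbb{S}^2(1)$. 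A standard computation produces a first integral, recasting the problem as a Hamiltonian flow on a two-dimensional phase cylinder whose parameter is $H$. The Carlotto-Schulz minimal profile of Theorem \ref{thm:2025-03-20-1} appears as a distinguished closed orbit $\gamma_0$ at $H=0$, invariant under the swap involution $(x_1,x_2,x_3)\mapsto(x_2,x_1,x_3)$, and the Huang-Wei \cite{HW} construction continues it to a branch parametrized by $H\ge 0$.

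The first step is to set up a shooting problem that captures closed profiles for $H$ near $0$. Starting from a point on $\gamma_0$ lying on the fixed set $\{x_1=x_2\}$ of the swap involution, I will integrate the CMC ODE in the phase cylinder for $(\alpha,H)$ near $(\alpha_0,0)$, where $\alpha$ records a transverse shooting direction. The closure condition for an embedded hypertorus reduces to a finite system of transversality equations, and the non-degeneracy of $\gamma_0$ inherited from \cite{CS} allows the implicit function theorem to exhibit the closed profiles near $\gamma_0$ as a smooth one-parameter arc $\Sigma$ through $(\alpha_0,0)$, along which the "winding data" of the orbit is preserved.

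The heart of the argument is the analysis of $H$ restricted to $\Sigma$. Because the CMC equation is the Euler-Lagrange equation of the area functional with a volume constraint, and $\gamma_0$ is the symmetric critical solution on $\Sigma$, one obtains $\frac{dH}{d\alpha}(\alpha_0)=0$. The crux is the second-variation identity
$$\frac{d^2 H}{d\alpha^2}(\alpha_0) < 0,$$
which I plan to verify by expanding the Jacobi operator of $\gamma_0$ and the linearized closure conditions to second order along the shooting parameter. Once this sign is established, $H(\alpha)=-c(\alpha-\alpha_0)^2+O((\alpha-\alpha_0)^3)$ with $c>0$; consequently, for each $H\in(-\varepsilon,0)$ with $\varepsilon$ sufficiently small, the equation $H(\alpha)=H$ has exactly two roots $\alpha_{\pm}$, each yielding a closed CMC profile curve.

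Finally, I will verify that the two resulting hypertori are embedded (being $C^2$-close to the embedded Carlotto-Schulz profile $\gamma_0$) and not ambient isometric, by distinguishing extrinsic invariants of the profiles $\gamma_{\alpha_+}$ and $\gamma_{\alpha_-}$ such as $\max x_1-\min x_1$ or the intrinsic volume of the hypertorus, both of which vary monotonically in $\alpha$ on $\Sigma$ near $\alpha_0$ and hence separate $\alpha_+$ from $\alpha_-$. \emph{The main obstacle} is the explicit sign computation $\frac{d^2H}{d\alpha^2}(\alpha_0)<0$: it must be carried out on the profile ODE uniformly in $n\ge 2$, and the fact that the second derivative is strictly negative (rather than positive) is precisely what explains why the bifurcation produces two branches only on the negative-$H$ side, in contrast with the single-branch picture of \cite{HW} for $H\ge 0$.
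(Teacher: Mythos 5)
The central step of your plan --- that $H$ restricted to the local branch $\Sigma$ of closed profiles through the Carlotto--Schulz solution $\gamma_0$ satisfies $\frac{dH}{d\alpha}(\alpha_0)=0$ and $\frac{d^2H}{d\alpha^2}(\alpha_0)<0$, so that both CMC hypertori arise from a fold at $\gamma_0$ --- is unjustified and in fact inconsistent with the known structure. If $H$ had a strict local maximum equal to $0$ at $\gamma_0$ along $\Sigma$, there would be no closed profiles near $\gamma_0$ for small $H>0$; but the branch through $\gamma_0$ does continue to small positive $H$ (this is the content of Huang--Wei \cite{HW}, and the continuity-in-$H$ argument of Lemma \ref{lem:3-12-1} works for either sign of $H$, keeping a transition value in the fixed interval $(r_0',r_0'')$ alive). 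So the Carlotto--Schulz branch crosses $\{H=0\}$ transversally rather than folding, and the heuristic ``CMC is the Euler--Lagrange equation with a volume constraint, hence $H$ is critical at the symmetric solution'' is a non sequitur. Relatedly, the proposed reduction to a ``Hamiltonian flow on a two-dimensional phase cylinder'' via a first integral is not available: the doubly rotationally symmetric ansatz yields a genuinely three-dimensional autonomous system in $(r,\theta,\alpha)$, which both \cite{CS} and this paper handle by shooting from the symmetry set $\{\theta=\tfrac{\pi}{4},\ \alpha=-\tfrac{\pi}{2}\}$, not by a conserved quantity.

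The second solution for $H<0$ does not live near $\gamma_0$ at all, which is why a purely local analysis at $\gamma_0$ cannot produce it. The paper's mechanism is global: for $H<0$ the shooting parameter ranges over $r_0\in\bigl(0,\mathrm{arccot}\frac{-H}{2n-1}\bigr)$, and the right endpoint $a(H)=\mathrm{arccot}\frac{-H}{2n-1}<\frac{\pi}{2}$ corresponds to the degenerate rotationally invariant orbit $r\equiv a(H)$. Lemma \ref{lem:3-11-7} shows that $r(s_*)\to a(H)<\frac{\pi}{2}$ as $r_0\to a(H)$, i.e.\ the orbit again exits through $\{\alpha=0\}$ there, while at $r_0''$ it exits through $\{r=\frac{\pi}{2}\}$ and at $r_0'$ through $\{\alpha=0\}$. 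This forces two separate transition values $\varrho_1\in(r_0',r_0'')$ and $\varrho_2\in(r_0'',a(H))$ at which the orbit reaches the corner $r=\frac{\pi}{2}$, $\alpha=0$ and closes up via the reflection symmetries of Lemma \ref{lem:2025-03-21-1}; Lemmas \ref{lem:3-11-2}--\ref{lem:3-11-6} are needed to exclude the third exit possibility $\dot\alpha(s_*)=0$, and this is where the smallness condition on $|H|$ (the explicit $\varepsilon(n)$) enters. As $H\to 0^-$ the second profile degenerates, which is why it has no minimal counterpart. To repair your argument you would have to replace the fold analysis at $\gamma_0$ by a study of the shooting map near the boundary value $r_0=a(H)$ --- essentially Lemma \ref{lem:3-11-7} --- so the gap is structural, not cosmetic.
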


\begin{remark}
For  $H\geq0$, one can construct a  embedding of the three-dimensional CMC $H$ hypertorus in \(\mathbb{S}^4(1)\). But for $-\varepsilon<H<0$, from our result, we can construct two different  embedding of the three-dimensional CMC $H$  hypertori in \(\mathbb{S}^4(1)\).
\end{remark}    

\begin{remark}
Perdomo \cite{P2} told us that he can give some numerical construction of the embedded CMC hypersurfaces.
\end{remark}


	\section{Proof of theorem \ref{thm:2025-03-21-1}}\label{prel}
	Let \(\gamma(s) = (x(s),y(s),z(s))\) be a arc-length curve in \(\left\{ (x,y,z) \in \mathbb{S}^2(1)\,|\,x>0,y>0\right\}\), then one can construct an immersion 
	\begin{align*}
		&F: \mathbb{S}^{n-1}(1) \times \mathbb{S}^{n-1}(1) \times I \to \mathbb{S}^{2n}(1), \\
		&(p_1,p_2,s) \mapsto (x(s)\,p_1,y(s)\,p_2,z(s)),
	\end{align*}
	where \(I\) is some interval.
	Let \((x,y,z) = (\sin r \cos \theta, \sin r \sin \theta, \cos r)\) and \(\alpha(s)\) denote the angle between the tangent vector of \(\gamma(s)\) and \(\frac{\partial}{\partial r}\), where \((r, \theta) \in (0,\pi) \times (0,\frac{\pi}{2})\).
	Huang and Wei \cite{HW} proved that the immersion \(F\) has constant mean curvature \(H\) if and only if \(r(s),\,\theta(s),\,\alpha(s)\) satisfy the following autonomous system of differential equations:
	\begin{empheq}[left=\empheqlbrace]{align} 
		\dot r&=\cos \alpha, \label{eq:3-11-1} \\
		\dot \theta&=\frac{\sin \alpha}{\sin r},\label{eq:3-11-2}\\
		\dot \alpha&=\frac{(2n-2)\cot 2 \theta}{\sin r }\cos \alpha - (2n-1)\cot r \sin \alpha+H.\label{eq:3-11-3}
	\end{empheq}
	Let us begin with a lemma on the symmetry of the solutions to this system.
	One can check it by a direct computation, so we omit the proof.
	\begin{lemma}\label{lem:2025-03-21-1}
		Let \((r(s),\theta(s),\alpha(s))\) be a solution of \eqref{eq:3-11-1},\eqref{eq:3-11-2},\eqref{eq:3-11-3}, then 
		\[
		(\tilde{r}(s),\tilde{\theta}(s), \tilde{\alpha}(s)) := (\pi-r(-s), \theta(-s),-\alpha(-s)), 
		\]
		\[
		(\hat{r}(s),\hat{\theta}(s),\hat{\alpha}(s)) := (r(-s), \frac{\pi}{2}-\theta(-s),\pi-\alpha(-s))
		\]
		and 
		\[
		(\check{r}(s),\check{\theta}(s),\check{\alpha}(s)) := (r(s),\theta(s),\alpha(s)+2k \pi),\, \ \forall k \in \mathbb{Z},
		\]
		are also solutions of that system.
	\end{lemma}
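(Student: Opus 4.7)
The plan is direct verification of each of the three symmetries. Since the system \eqref{eq:3-11-1}--\eqref{eq:3-11-3} is autonomous and its right-hand side sees $\alpha$ only through $\sin\alpha$ and $\cos\alpha$, it suffices to substitute each candidate triple, apply the chain rule, and compare both sides using the parity identities $\sin(\pi-x)=\sin x$, $\cos(\pi-x)=-\cos x$, $\cot(\pi-x)=-\cot x$, $\cot(\pi-2x)=-\cot 2x$, together with evenness of cosine, oddness of sine, and $2\pi$-periodicity. For the translation $(\check r,\check\theta,\check\alpha)=(r,\theta,\alpha+2k\pi)$, the verification is immediate from $2\pi$-periodicity, since every occurrence of $\alpha$ on the right-hand side lies inside a sine or cosine.

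For $(\tilde r,\tilde\theta,\tilde\alpha)=(\pi-r(-s),\theta(-s),-\alpha(-s))$, I would first dispatch \eqref{eq:3-11-1}--\eqref{eq:3-11-2}: the sign from $\pi-r$ combines with the chain-rule sign in $r(-s)$ to give $\dot{\tilde r}(s)=\dot r(-s)=\cos\alpha(-s)=\cos\tilde\alpha(s)$, and $\dot{\tilde\theta}(s)=-\dot\theta(-s)$ matches $\sin\tilde\alpha(s)/\sin\tilde r(s)$ after using $\sin(\pi-r(-s))=\sin r(-s)$. The only real bookkeeping step is \eqref{eq:3-11-3}: writing out its right-hand side at $(\tilde r,\tilde\theta,\tilde\alpha)$, the $(2n-2)$-term is manifestly invariant, while the $(2n-1)$-term picks up two sign flips, one from $\cot\tilde r=-\cot r(-s)$ and one from $\sin\tilde\alpha=-\sin\alpha(-s)$, which cancel, so that the whole right-hand side equals $\dot\alpha(-s)=\dot{\tilde\alpha}(s)$. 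The argument for $(\hat r,\hat\theta,\hat\alpha)=(r(-s),\tfrac{\pi}{2}-\theta(-s),\pi-\alpha(-s))$ is parallel: in \eqref{eq:3-11-3} the $(2n-2)$-term now absorbs two sign flips coming from $\cot 2\hat\theta=-\cot 2\theta(-s)$ and $\cos\hat\alpha=-\cos\alpha(-s)$, whereas the $(2n-1)$-term is unchanged because $\cot\hat r=\cot r(-s)$ and $\sin\hat\alpha=\sin\alpha(-s)$, matching $\dot\alpha(-s)=\dot{\hat\alpha}(s)$.

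I do not anticipate any genuine obstacle: the proof is pure sign bookkeeping. The only mild point of care is to make sure that the chain-rule factor from $s\mapsto -s$ is combined correctly with the parity of the reflection in each dependent variable so that the right-hand side of the original system, evaluated at the transformed arguments, is reproduced exactly. The cancellations in \eqref{eq:3-11-3} for the two reflection symmetries are the only steps requiring more than a single line.
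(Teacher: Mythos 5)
Your proof is correct and is exactly the direct sign-bookkeeping verification that the paper itself invokes (the paper states the lemma "can be checked by a direct computation" and omits the details). All the chain-rule factors and parity cancellations you identify — in particular the two compensating sign flips in the $(2n-1)$-term for $(\tilde r,\tilde\theta,\tilde\alpha)$ and in the $(2n-2)$-term for $(\hat r,\hat\theta,\hat\alpha)$ — are accurate.
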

	\begin{lemma}\label{lem:3-11-1}
		Let \(H<0\), then for any \(r_0 \in (0, \mathrm{arccot}\frac{-H}{2n-1})\), there exists \(0<s_*<\infty\) so that the system \eqref{eq:3-11-1},\eqref{eq:3-11-2},\eqref{eq:3-11-3} has a unique right maximal solution:
		\[
		(r,\theta,\alpha):\,(0,s_*)\to B:=(0,\frac{\pi}{2})\times(0,\frac{\pi}{4})\times (-\frac{\pi}{2},0)
		\]
		with initial data \((r,\theta,\alpha)(0)=(r_0,\frac{\pi}{4},-\frac{\pi}{2})\) and \(\dot \alpha(s)>0\) in \((0,s_*)\).
		Moreover, \(r(s_*)=\frac{\pi}{2}\) or \(\alpha(s_*)=0\) or \(\dot \alpha(s_*)=0\);
		in either case \(\theta(s_*)>0\).
	\end{lemma}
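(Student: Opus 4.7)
The plan is to set up the Cauchy problem for \eqref{eq:3-11-1}--\eqref{eq:3-11-3} with initial data $(r,\theta,\alpha)(0)=(r_0,\pi/4,-\pi/2)$ and to track the right maximal solution that remains in $B$ while $\dot\alpha>0$. First I would verify that the vector field is smooth at this initial point: the only potentially singular factor, $\cot(2\theta)/\sin r$, is regular because $\cot(2\theta)=0$ at $\theta=\pi/4$ and $r_0\in(0,\pi/2)$ keeps $\sin r$ and $\cot r$ away from zero, so Picard--Lindel\"of produces a unique local smooth solution. Evaluating the system at $s=0$ yields $\dot r(0)=0$, $\dot\theta(0)=-1/\sin r_0<0$, and $\dot\alpha(0)=(2n-1)\cot r_0+H$, which is strictly positive precisely because of the hypothesis $r_0<\mathrm{arccot}(-H/(2n-1))$. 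Differentiating \eqref{eq:3-11-1} gives $\ddot r(0)=-\sin(-\pi/2)\dot\alpha(0)=\dot\alpha(0)>0$, so for small $s>0$ the orbit enters the interior of $B$ and satisfies $\dot\alpha>0$ there; consequently $s_*$, defined as the supremum of times on which the solution lies in $B$ with $\dot\alpha>0$, is strictly positive.

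On $(0,s_*)$ we have $\alpha\in(-\pi/2,0)$, hence $\dot r=\cos\alpha>0$ and $\dot\theta=\sin\alpha/\sin r<0$; thus $r$ is strictly increasing and $\theta$ is strictly decreasing, which automatically rules out the exits $r=0$, $\theta=\pi/4$, and $\alpha=-\pi/2$ and leaves only $r(s_*)=\pi/2$, $\theta(s_*)=0$, $\alpha(s_*)=0$, and $\dot\alpha(s_*)=0$ as candidates. To see $s_*<\infty$, I would argue by contradiction: if $s_*=\infty$, then by monotonicity $\alpha$ converges to some $\alpha^*\in(-\pi/2,0]$, and $\cos\alpha^*>0$ makes $\dot r=\cos\alpha$ bounded below by a positive constant for large $s$, contradicting $r<\pi/2$.

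The main obstacle is ruling out the bad exit $\theta(s_*)=0$. Letting $\alpha^*:=\lim_{s\to s_*}\alpha(s)\in(-\pi/2,0]$, I would split into two cases. If $\alpha^*<0$, then $\dot\theta$ stays bounded above by a strictly negative constant near $s_*$, forcing $\theta(s)\sim c(s_*-s)$ and $\cot(2\theta)\sim 1/(2c(s_*-s))$; substituting into \eqref{eq:3-11-3} gives $\dot\alpha\gtrsim 1/(s_*-s)$, whose integral over any interval ending at $s_*$ diverges and contradicts the boundedness of $\alpha$. The delicate case is $\alpha^*=0$, where $\dot\theta\to 0$ and no such immediate blow-up is available. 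Here I would reparameterize the orbit by $\alpha$ (valid since $\dot\alpha>0$) and use the identity
\[
\frac{d\log\sin(2\theta)}{d\alpha}=\frac{2\cot(2\theta)\sin\alpha}{(2n-2)\cot(2\theta)\cos\alpha-(2n-1)\cot r\,\sin\alpha\,\sin r+H\sin r}.
\]
Once $\theta$ is small enough that the first term of the denominator dominates $|H|\sin r$, this quantity admits the uniform bound
\[
\left|\frac{d\log\sin(2\theta)}{d\alpha}\right|\le\frac{2|\tan\alpha|}{n-1},
\]
whose right-hand side is integrable up to $\alpha=0$. Integrating from some $\alpha_0$ near $0$ keeps $\log\sin(2\theta)$ bounded below, which would establish $\theta(s_*)>0$ and conclude the proof of the lemma.
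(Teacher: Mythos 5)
Your proposal is correct and its core step coincides with the paper's: your bound \(\left|\frac{d\log\sin(2\theta)}{d\alpha}\right|\le \frac{2|\tan\alpha|}{n-1}\), valid once \((2n-2)\cot(2\theta)\cos\alpha\) dominates \(|H|\sin r\), is exactly the paper's differential inequality \(\dot\alpha\tan\alpha\le(n-1)\dot\theta\cot(2\theta)\), integrated to keep \(\log\sin(2\theta)\) bounded below and thus rule out \(\theta(s_*)=0\). The only differences are cosmetic: your case split on \(\alpha^*\) is redundant (the \(\alpha^*=0\) computation already handles \(\alpha^*<0\)), and you obtain \(s_*<\infty\) from the positive lower bound on \(\dot r\) rather than the paper's no-equilibrium remark, while supplying the routine local-existence details the paper omits.
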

	\begin{proof}
		We first prove that \(\theta(\sst)>0\).
		Note that \(\dot r(s)>0,\ \dot \theta(s)<0\) and \(\dot \alpha(s)>0\) for all \( s \in (0,\sst)\).
		Suppose \(\theta(\sst)=\lim_{s \to \sst}\theta(s)=0\), then 
		\[
		\lim_{s \to \sst}\frac{(n-1)\cot 2 \theta(s)\cos \alpha(s)}{\sin r(s)} =\infty.
		\]
		Hence, there exists \(s_0 \in (0,\sst)\) such that 
		\[
			\frac{(n-1)\cot 2 \theta(s)\cos \alpha(s)}{\sin r(s)} \ge -H
		\]
		for \(s \in [s_0,\sst)\).
		It follows that 
		\[
		\dot \alpha \ge \frac{(n-1)\cot 2 \theta \cos \alpha}{\sin r} = (n-1)\dot \theta\cot 2 \theta \cot \alpha ,
		\]
		namely,
		\[
		\dot \alpha\tan \alpha \le (n-1)\dot \theta\cot 2 \theta 
		\]
		for \(s \in [s_0,\sst)\).
		Integrating this inequality from \(s_0\) to \(s\) gives 
		\[
		\log \frac{\cos \alpha(s_0)}{\cos \alpha(s)} \le \frac{1}{2}(n-1)\log \frac{\sin 2 \theta(s )}{\sin 2 \theta(s_0)}
		\]
		which is impossible as \(s\) tends to \(\sst\), since the right hand side tends to \(-\infty\) as \(s\) tends to \(\sst\).
		Now 
		\begin{empheq}{align*}
			r(\sst) &=\lim_{s \to \sst}r(s) \in (r_0, \frac{\pi}{2}],
			\\
			\theta(\sst)&=\lim_{s \to \sst} \theta(s) \in (0,\frac{\pi}{4}),\\
			\alpha(\sst)&= \lim_{s \to \sst} \alpha(s) \in (-\frac{\pi}{2},0],
		\end{empheq}
		which follows that \(\sst<\infty\) since system \eqref{eq:3-11-1}, \eqref{eq:3-11-2}, \eqref{eq:3-11-3} has no equilibrium.
		Furthermore, \(\sst<S\), where \(S\) is the right end point of the right maximal solution of \eqref{eq:3-11-1}, \eqref{eq:3-11-2}, \eqref{eq:3-11-3} with initial data \(r(0)=r_0,\ \theta(0)=\frac{\pi}{4},\ \alpha(0)=-\frac{\pi}{2}\).
		By the definition of \(\sst\) we have either \(r(\sst)=\frac{\pi}{2},\ \alpha(\sst)=0\) or \(\dot \alpha(\sst)=0\), otherwise \((0,\sst)\) is not right maximally extended with given property.
	\end{proof}
	\begin{lemma}\label{lem:3-11-2}
		Let \(H<0\) and 
		\(
		(r,\theta,\alpha):\,(0,\sst) \to B
		\) 
		be a solution as in lemma \ref{lem:3-11-1}.
		If 
		\(
		(2n-2)\cot 2 \theta \cos \alpha|_{s=s_0}+H \ge 0
		\) for some \(s_0 \in (0,\sst)\), then \(\dot \alpha(\sst)>0\).
	\end{lemma}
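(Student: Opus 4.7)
The plan is to introduce the auxiliary quantity
\[
f(s) := (2n-2)\cot 2\theta(s)\,\cos\alpha(s) + H,
\]
which is the ``no $\sin r$'' skeleton of the right-hand side of \eqref{eq:3-11-3}, and to establish two separate facts: a pointwise lower bound $\dot\alpha(s)\ge f(s)$ on $(0,s_*)$, and a strict monotonicity $\dot f(s)>0$ on $(0,s_*)$. Since the hypothesis says $f(s_0)\ge 0$ at some $s_0\in(0,s_*)$, monotonicity will promote this to $f(s_*)>0$, and the pointwise bound, extended to $s_*$, will give $\dot\alpha(s_*)\ge f(s_*)>0$.

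For the pointwise comparison, on the region $B=(0,\tfrac{\pi}{2})\times(0,\tfrac{\pi}{4})\times(-\tfrac{\pi}{2},0)$ we have $\cos\alpha>0$, $\cot 2\theta>0$, $0<\sin r\le 1$, $\cot r\ge 0$, and $\sin\alpha<0$. Therefore
\[
\frac{(2n-2)\cot 2\theta\cos\alpha}{\sin r}\ \ge\ (2n-2)\cot 2\theta\cos\alpha,\qquad -(2n-1)\cot r\sin\alpha\ \ge 0,
\]
so \eqref{eq:3-11-3} immediately yields $\dot\alpha(s)\ge f(s)$ on $(0,s_*)$.

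For the strict monotonicity of $f$, differentiating along the flow and substituting \eqref{eq:3-11-2} gives
\[
\dot f=-2(2n-2)\csc^2 2\theta\,\frac{\sin\alpha\,\cos\alpha}{\sin r}\ -\ (2n-2)\cot 2\theta\,\sin\alpha\,\dot\alpha.
\]
On $B$ each summand is strictly positive: the first because $\sin\alpha<0$, $\cos\alpha>0$, $\sin r>0$, $\csc^2 2\theta>0$; the second because $\cot 2\theta>0$, $\sin\alpha<0$, and $\dot\alpha>0$ by Lemma~\ref{lem:3-11-1}. Hence $f$ is strictly increasing on $(0,s_*)$.

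Finally, by Lemma~\ref{lem:3-11-1} the limits $r(s_*)>0$, $\theta(s_*)>0$ and $\alpha(s_*)\in(-\tfrac{\pi}{2},0]$ exist, so both $f$ and $\dot\alpha$ extend continuously up to $s_*$. Strict monotonicity gives $f(s_*)>f(s_0)\ge 0$, and passing the inequality $\dot\alpha\ge f$ to $s_*$ yields $\dot\alpha(s_*)\ge f(s_*)>0$. I expect the only mild obstacle to be checking that the boundary cases $r(s_*)=\tfrac{\pi}{2}$ or $\alpha(s_*)=0$ allowed by Lemma~\ref{lem:3-11-1} do not cause any degeneration in the limit; both fit into the argument since $\cot 2\theta(s_*)$ stays finite (as $\theta(s_*)>0$), $\cos\alpha(s_*)>0$ (as $\alpha(s_*)>-\tfrac{\pi}{2}$), and the bound $\dot\alpha\ge f$ is preserved under continuous extension.
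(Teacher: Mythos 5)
Your proof is correct and takes essentially the same approach as the paper: the paper also works with the quantity \((2n-2)\cot 2\theta\cos\alpha+H\), observes that it is increasing on \((0,\sst)\) (there deduced directly from \(\dot\theta<0\) and \(\dot\alpha>0\), which make both factors positive and increasing, rather than by differentiating \(f\) along the flow), hence strictly positive at \(\sst\), and then bounds \(\dot\alpha(\sst)\) from below by this quantity using exactly your inequalities \(\sin r\le 1\) and \(-(2n-1)\cot r\sin\alpha\ge 0\). The difference is purely cosmetic.
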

	\begin{proof}
		Since \(\dot \alpha(s)>0\) for \(s \in (0,\sst)\) and \(
		(2n-2)\cot 2 \theta \cos \alpha|_{s=s_0}+H \ge 0
		\), one gets that 
		\[
		(2n-2)\cot 2 \theta \cos \alpha+H > 0
		\]
		for \(s \in (s_0,\sst ]\).
		In particular
		\[
		(2n-2)\cot 2 \theta \cos \alpha|_{s=\sst}+H > 0,
		\]
		which follows that 
		\[
		\begin{aligned}
			\dot \alpha(\sst) =& \left.\frac{(2n-2)\cot 2 \theta \cos \alpha}{\sin r}\right|_{s=\sst}-\left.(2n-1)\cot r \sin \alpha\right|_{s=\sst} +H \\
			\ge& \left.(2n-2) \cot 2 \theta \cos \alpha\right|_{s=\sst}+H-\left.(2n-1)\cot r \sin \alpha\right|_{s=\sst}\\
			>&0.
		\end{aligned}
		\]
	\end{proof}
	\begin{lemma}\label{lem:3-11-3}
		Let \(H<0\), \(
		(r,\theta,\alpha):\,(0,\sst) \to B
		\) 
		be a solution as in lemma \ref{lem:3-11-1}  and \(\dot \alpha(\sst)=0\). 
		Then \((2n-2)\cot 2 \theta \cos \alpha +H<0\) for \( s \in \sst\).
		Moreover, \(\alpha(s)+\frac{\pi}{2}<(2n-1)\left( \frac{\pi}{4}-\theta(s) \right)+H\left( 1-\frac{1}{\sin r_0} \right)s\) for \(s \in (0,\sst]\).
	\end{lemma}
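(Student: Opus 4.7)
The plan is to treat the two assertions separately, drawing only on the ODE system \eqref{eq:3-11-1}-\eqref{eq:3-11-3} and Lemmas \ref{lem:3-11-1}-\ref{lem:3-11-2}.

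The first assertion is essentially the contrapositive of Lemma \ref{lem:3-11-2}: if $(2n-2)\cot 2\theta\cos\alpha+H\ge 0$ were to hold at some $s_0\in(0,s_*)$, that lemma would force $\dot\alpha(s_*)>0$, contradicting the standing hypothesis $\dot\alpha(s_*)=0$. Extending strictness all the way up to $s=s_*$ is then immediate from continuity together with the sign information already in Lemma \ref{lem:3-11-1} (namely $r(s)>r_0>0$ and $\alpha(s)\in(-\pi/2,0]$).

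For the second assertion, the plan is to derive a clean pointwise upper bound for $\dot\alpha$ and then integrate. The crucial manipulation is to use \eqref{eq:3-11-2} in the form $\sin\alpha=\dot\theta\sin r$ to rewrite the last term of \eqref{eq:3-11-3} as $(2n-1)\cot r\sin\alpha=(2n-1)\cos r\,\dot\theta$, converting \eqref{eq:3-11-3} into
\[
\dot\alpha \;=\; \frac{(2n-2)\cot 2\theta\cos\alpha}{\sin r}\;-\;(2n-1)\cos r\,\dot\theta\;+\;H.
\]
I would then bound the first term by combining the first assertion $(2n-2)\cot 2\theta\cos\alpha<-H$ with the monotonicity $\sin r\ge\sin r_0$ (valid since $r\in[r_0,\pi/2]$), giving $(2n-2)\cot 2\theta\cos\alpha/\sin r<-H/\sin r_0$. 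For the middle term, $\cos r\le 1$ together with $\dot\theta<0$ from Lemma \ref{lem:3-11-1} yields $-(2n-1)\cos r\,\dot\theta\le-(2n-1)\dot\theta$. Adding these two estimates and the constant $+H$ produces
\[
\dot\alpha\;<\;H\!\left(1-\frac{1}{\sin r_0}\right)-(2n-1)\dot\theta \qquad\text{on }(0,s_*),
\]
and integrating from $0$ to $s$ with $\alpha(0)=-\pi/2$, $\theta(0)=\pi/4$ delivers the claimed inequality on $(0,s_*]$.

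I do not anticipate a genuine obstacle; the only real trick is spotting the substitution that turns $(2n-1)\cot r\sin\alpha$ into $(2n-1)\cos r\,\dot\theta$, after which the $\dot\alpha$-equation separates into a term bounded via the first assertion plus a pure $\dot\theta$-contribution, both of which integrate explicitly. Strictness of the integrated inequality on $(0,s_*]$ is inherited from strictness of the pointwise bound on $(0,s_*)$.
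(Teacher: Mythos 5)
Your proposal is correct and follows essentially the same route as the paper: the first claim is the contrapositive of Lemma \ref{lem:3-11-2}, and the second comes from exactly the same pointwise bound \(\dot\alpha < -(2n-1)\dot\theta + H\left(1-\frac{1}{\sin r_0}\right)\) (obtained via \(\cot r\sin\alpha=\cos r\,\dot\theta\), the estimates \(\sin r\ge \sin r_0\), \(\cos r\le 1\), \(\dot\theta<0\), and the first assertion) followed by integration from \(0\) to \(s\). The only caveat is that continuity upgrades the strict inequality on \((0,s_*)\) merely to a non-strict one at \(s=s_*\); but the paper's own proof likewise establishes the first claim only on the open interval, and that is all the integration step requires.
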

	\begin{proof}
		It follows from lemma \ref{lem:3-11-2} that \((2n-2)\cot 2 \theta \cos \alpha+H<0\) for \(s \in (0,\sst)\).
		Hence 
		\begin{empheq}{align*}
			\dot \alpha =& \frac{(2n-2)\cot 2 \theta \cos \alpha}{\sin r}-(2n-1)\cot r \sin \alpha +H \\
			<& \frac{(2n-2)\cot 2 \theta \cos \alpha}{\sin r_0}-(2n-1)\cot r \sin \alpha +\frac{H}{\sin r_0}+\left( 1-\frac{1}{\sin r_0} \right)H \\
			<& -(2n-1)\dot \theta\cos r +H\left( 1-\frac{1}{\sin r_0} \right) \\
			<& -(2n-1)\dot \theta +H\left( 1-\frac{1}{\sin r_0} \right)
		\end{empheq}
		for \(s \in (0,\sst)\).
		Integrating this inequality from \(0\) to \(s\) gives
		\[
		\alpha +\frac{\pi}{2}< (2n-1)\left( \frac{\pi}{4}-\theta \right) +H(1-\frac{1}{\sin r_0})s
		\]
		for \(s \in (0,\sst]\). 
	\end{proof}
	\begin{lemma}\label{lem:3-11-4}
		Let \(H<0\) and \(
		(r,\theta,\alpha):\,(0,\sst) \to B
		\) 
		be a solution as in lemma \ref{lem:3-11-1}.
		Then \(\sst<\frac{3\sqrt{2}\pi}{4}\).
	\end{lemma}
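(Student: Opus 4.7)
My plan is to bound $\sst$ by a direct arc-length estimate that uses only the monotonicity properties established in Lemma~\ref{lem:3-11-1}. On $(0,\sst)$ we have $\dot r=\cos\alpha>0$, $\dot\theta=\sin\alpha/\sin r<0$, and $\alpha\in(-\pi/2,0)$, so $r$ is monotone increasing with $r(\sst)\le\pi/2$, $\theta$ is monotone decreasing with $\theta(\sst)>0$, and both $\cos\alpha$ and $-\sin\alpha$ are nonnegative throughout. Since the parametrization is by arc length, bounding $\sst$ is equivalent to bounding the length of $\gamma$, for which I will exploit a single pointwise inequality.

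The key inequality is $\cos\alpha+|\sin\alpha|\ge 1$ on $[-\pi/2,0]$, which follows at once from $(\cos\alpha+|\sin\alpha|)^{2}=1+2\cos\alpha\,|\sin\alpha|\ge 1$. Integrating on $(0,\sst)$ and substituting $\dot r=\cos\alpha$ together with $|\sin\alpha|=-\sin\alpha=-\sin r\,\dot\theta$, I obtain
\[
\sst=\int_{0}^{\sst}1\,ds\;\le\;\int_{0}^{\sst}\bigl(\cos\alpha+|\sin\alpha|\bigr)\,ds\;=\;(r(\sst)-r_0)+\int_{0}^{\sst}\sin r\,(-\dot\theta)\,ds.
\]
Since $\sin r\le 1$ and $-\dot\theta>0$, the last integral is at most $\pi/4-\theta(\sst)<\pi/4$, and $r(\sst)-r_0<\pi/2$ is strict because $r_0>0$. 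This gives $\sst<\pi/2+\pi/4=3\pi/4<3\sqrt{2}\pi/4$, proving the lemma.

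There is essentially no obstacle; the argument is a clean application of the $L^{1}$-vs-$L^{2}$ comparison $|a|+|b|\ge\sqrt{a^{2}+b^{2}}$ to the decomposition $(\dot r,\sin r\,\dot\theta)=(\cos\alpha,\sin\alpha)$. The only points that need care are matching the signs of $\sin\alpha$ and $\dot\theta$, and using $\sin r\le 1$ to pass from the $\sin\alpha$ integral to the $\dot\theta$ integral. If one wanted to land on the stated constant $3\sqrt{2}\pi/4$ more symmetrically, an alternative route is to partition $(0,\sst)$ into the subsets $\{\cos\alpha\ge 1/\sqrt{2}\}$ and $\{-\sin\alpha\ge 1/\sqrt{2}\}$ (which cover $(0,\sst)$ by $\cos^{2}\alpha+\sin^{2}\alpha=1$) and bound the measures of the two pieces by $\sqrt{2}(r(\sst)-r_0)\le\sqrt{2}\pi/2$ and $\sqrt{2}(\pi/4-\theta(\sst))\le\sqrt{2}\pi/4$, respectively.
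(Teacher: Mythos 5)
Your proof is correct, and it takes a genuinely different (and in fact sharper) route than the paper. The paper splits the time interval at the unique instant $s_0$ where $\alpha=-\pi/4$ (using $\dot\alpha>0$), and on each piece exploits that one of $\cos\alpha$, $-\sin\alpha$ exceeds $\tfrac{\sqrt{2}}{2}$: before $s_0$ it integrates $\dot\theta<\sin\alpha<-\tfrac{\sqrt{2}}{2}$ to get $s_0<\tfrac{\sqrt{2}\pi}{4}$, and after $s_0$ it integrates $\dot r=\cos\alpha>\tfrac{\sqrt{2}}{2}$ to get $\sst-s_0<\tfrac{\sqrt{2}\pi}{2}$, summing to the stated constant. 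You instead use the single pointwise inequality $\cos\alpha+|\sin\alpha|\ge 1$ on $(-\pi/2,0)$ and integrate once, converting $\int\cos\alpha$ into the total variation of $r$ (at most $\pi/2$) and $\int(-\sin\alpha)=\int\sin r\,(-\dot\theta)\le\int(-\dot\theta)$ into the total variation of $\theta$ (at most $\pi/4$); all signs and the bound $\sin r\le 1$ are handled correctly, and $\theta(\sst)>0$ from Lemma \ref{lem:3-11-1} closes the estimate. This avoids any case analysis (the paper must allow for $s_0$ not existing), does not use the monotonicity of $\alpha$, and yields the strictly better bound $\sst<\tfrac{3\pi}{4}$, which trivially implies the stated $\sst<\tfrac{3\sqrt{2}\pi}{4}$. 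Your closing remark is also apt: the covering argument by $\{\cos\alpha\ge 1/\sqrt{2}\}$ and $\{-\sin\alpha\ge 1/\sqrt{2}\}$ is essentially the paper's proof in disguise and recovers its exact constant.
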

	\begin{proof}
		Let \(\sn\ \text{in}\ (0,\sst)\) be the point, if any, at which \(\alpha(\sn)=-\frac{\pi}{4}\), then 
		\[
		\dot \theta = \frac{ \sin \alpha}{\sin r} < \sin \alpha < -\frac{\sqrt{2}}{2}
		\]
		for \(s \in (0,\sn)\).
		Integrating from \(0\) to \(\sn\) gives 
		\[
		\theta(\sn)-\frac{\pi}{4}<-\frac{\sqrt{2}}{2}\sn,
		\]
		which implies that 
		\begin{equation}\label{eq:3-11-4}
			\sn < \frac{\sqrt{2}\pi}{4}.
		\end{equation}
		On the other hand, \(\dot r =\cos \alpha>\frac{\sqrt{2}}{2}\) for \(s \in (\sn,\sst)\).
		Integrating from \(\sn\) to \(\sst\) gives that 
		\[
		r(\sst)-r(\sn)>\frac{\sqrt{2}}{2}(\sst-\sn),
		\]
		which implies that 
		\begin{equation}\label{eq:3-11-5}
			\sst-\sn<\sqrt{2}(r(\sst)-r(\sn))<\sqrt{2}\times \frac{\pi}{2}=\frac{\sqrt{2}\pi}{2}.
		\end{equation}
		Combining \eqref{eq:3-11-4} and \eqref{eq:3-11-5} gives that 
		\[
		\sst <\frac{3 \sqrt{2}\pi}{4}.
		\]
	\end{proof}
	\begin{lemma}\label{lem:3-11-5}
		Let \(H<0\), \((r,\theta,\alpha):\,(0,\sst) \to B\) be al solution as in lemma \ref{lem:3-11-1} and \(\dot \alpha(\sst)=0\). 
		If \(3\sqrt{2}H\left( 1-\frac{1}{\sin r_0} \right)<1\) and 
		\[
		\sqrt{2}(n-1)\tan\left( \frac{2}{2n-1}\left( \frac{\pi}{4}-H\left( 1-\frac{1}{\sin r_0} \right)\frac{3 \sqrt{2}\pi}{4} \right) \right)+H \ge 0,
		\]
		then \(\alpha(s)<-\frac{\pi}{4}\) for \(s \in (0,\sst]\).
	\end{lemma}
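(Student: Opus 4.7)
The plan is a contradiction argument, using the strict monotonicity $\dot\alpha>0$ on $(0,s_*)$ to pin down a first crossing and Lemma \ref{lem:3-11-2} to derive the contradiction. Concretely, suppose the conclusion fails. Since $\alpha(0)=-\pi/2$, $\alpha$ is continuous on $[0,s_*]$, and $\dot\alpha>0$ on $(0,s_*)$, the intermediate value theorem produces a smallest $s_0\in(0,s_*]$ with $\alpha(s_0)=-\pi/4$, so in particular $\cos\alpha(s_0)=\sqrt{2}/2$. The target is to prove $(2n-2)\cot 2\theta\,\cos\alpha+H>0$ at $s_0$; once established, Lemma \ref{lem:3-11-2} forces $\dot\alpha(s_*)>0$, contradicting the standing hypothesis $\dot\alpha(s_*)=0$.

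To bound $\theta(s_0)$ from above, I would plug $\alpha(s_0)=-\pi/4$ into the estimate of Lemma \ref{lem:3-11-3}, getting
\[
\tfrac{\pi}{4}<(2n-1)\bigl(\tfrac{\pi}{4}-\theta(s_0)\bigr)+H\bigl(1-\tfrac{1}{\sin r_0}\bigr)s_0.
\]
Since $r_0<\pi/2$ and $H<0$, the coefficient $H(1-1/\sin r_0)$ is strictly positive, so Lemma \ref{lem:3-11-4} lets me replace $s_0$ by its upper bound $3\sqrt{2}\pi/4$ at the cost of weakening the inequality. The first hypothesis $3\sqrt{2}H(1-1/\sin r_0)<1$ is exactly what makes the resulting quantity $\tfrac{\pi}{4}-H(1-1/\sin r_0)\tfrac{3\sqrt{2}\pi}{4}$ positive, which is needed both to yield a nontrivial upper bound on $2\theta(s_0)$ and to keep the tangent appearing in the second hypothesis well defined and positive.

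Rearranging produces a strict upper bound on $2\theta(s_0)$ of the form $\tfrac{\pi}{2}-\tfrac{2}{2n-1}\bigl(\tfrac{\pi}{4}-H(1-\tfrac{1}{\sin r_0})\tfrac{3\sqrt{2}\pi}{4}\bigr)$, a number in $(0,\pi/2)$. Using monotonicity of $\cot$ on $(0,\pi/2)$ then gives
\[
\cot 2\theta(s_0)>\tan\!\Bigl(\tfrac{2}{2n-1}\bigl(\tfrac{\pi}{4}-H(1-\tfrac{1}{\sin r_0})\tfrac{3\sqrt{2}\pi}{4}\bigr)\Bigr).
\]
Multiplying by $(2n-2)\cos\alpha(s_0)=\sqrt{2}(n-1)$ and adding $H$, the second hypothesis of the lemma says precisely that the right-hand side of the resulting inequality is $\geq 0$, so $(2n-2)\cot 2\theta\cos\alpha+H>0$ at $s_0$, which is what I wanted.

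The main obstacle, apart from careful sign bookkeeping (the signs of $H$, $1-1/\sin r_0$, and their positive product), is the boundary case $s_0=s_*$: Lemma \ref{lem:3-11-2} is stated only for $s_0$ in the open interval $(0,s_*)$. This will be handled by continuity of $(2n-2)\cot 2\theta\cos\alpha+H$ at $s_*$—the inequality is strict, so the expression remains positive on some interval $(s_*-\delta,s_*)$, and applying Lemma \ref{lem:3-11-2} at any interior point of this interval yields the same contradiction $\dot\alpha(s_*)>0$.
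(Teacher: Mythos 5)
Your argument is correct and is essentially the paper's own proof: assume a crossing $\alpha(s_0)=-\tfrac{\pi}{4}$, combine Lemma \ref{lem:3-11-3} with the bound $s_0\le s_*<\tfrac{3\sqrt{2}\pi}{4}$ from Lemma \ref{lem:3-11-4} (using that $H\bigl(1-\tfrac{1}{\sin r_0}\bigr)>0$) to bound $2\theta(s_0)$ from above, and then conclude $(2n-2)\cot 2\theta\,\cos\alpha+H>0$ at $s_0$, which contradicts Lemma \ref{lem:3-11-2} under the standing hypothesis $\dot\alpha(s_*)=0$. Your extra care with the endpoint case $s_0=s_*$ (where Lemma \ref{lem:3-11-2} is stated only for interior points) is a detail the paper leaves implicit, and your continuity fix handles it correctly.
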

	\begin{proof}
		Suppose \(\alpha(\sn)=-\frac{\pi}{4}\) for some \(\sn \in (0,\sst]\), then we have
		\begin{equation} \label{eq:3-11-6}
			\frac{\pi}{4}<(2n-1)\left( \frac{\pi}{4}-\theta(s_0) \right)+H(1-\frac{1}{\sin r_0})s_0
		\end{equation}
		by lemma \ref{lem:3-11-3}.
		By lemma \ref{lem:3-11-4} we have 
		\begin{equation}\label{eq:3-11-7}
			s_0 \le \sst < \frac{3 \sqrt{2}\pi}{4}.
		\end{equation}
		Combining \eqref{eq:3-11-6} and \eqref{eq:3-11-7} gives that 
		\[
		2 \theta(\sn)<\frac{\pi}{2}-\frac{2}{2n-1}\left( \frac{\pi}{4}-H\left( 1-\frac{1}{\sin r_0} \right)\frac{3 \sqrt{2}\pi}{4} \right).
		\]
		Hence
		\[
		\begin{aligned}
			&(2n-2)\cot 2 \theta \cos \alpha|_{s=\sn}+H \\ >&\sqrt{2}(n-1)\tan\left( \frac{2}{2n-1}\left( \frac{\pi}{4}-H\left( 1-\frac{1}{\sin r_0} \right)\frac{3 \sqrt{2}\pi}{4} \right) \right)+H \\ \ge& 0.
		\end{aligned}
		\]
		But this is impossible by lemma \ref{lem:3-11-2}.
	\end{proof}
	\begin{lemma}\label{lem:3-11-6}
		Let \(H<0\), \((r,\theta,\alpha):\,(0,\sst) \to B\) be a solution as in lemma \ref{lem:3-11-1}.
		If \(H\left( 1-\frac{1}{\sin r_0} \right)3 \sqrt{2}<1\), \(H \ge -\frac{\sqrt{2}}{2}(2n-3)\) and 
		\[
			\sqrt{2}(n-1)\tan\left( \frac{2}{2n-1}\left( \frac{\pi}{4}-H\left( 1-\frac{1}{\sin r_0} \right)\frac{3 \sqrt{2}\pi}{4} \right) \right)+H \ge 0,
		\]
		then \(\dot \alpha(\sst) \ne 0\).
	\end{lemma}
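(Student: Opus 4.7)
The plan is to argue by contradiction: suppose $\dot{\alpha}(s_*)=0$ and derive the contradictory conclusion $\ddot{\alpha}(s_*)>0$. First, the hypotheses of Lemma~\ref{lem:3-11-6} contain all those of Lemma~\ref{lem:3-11-5} (the condition $H(1-1/\sin r_0)3\sqrt{2}<1$ and the tangent inequality are common to both), so Lemma~\ref{lem:3-11-5} applies and yields $\alpha(s)<-\pi/4$ for every $s\in(0,s_*]$. Consequently $\sin\alpha(s_*)<-\sqrt{2}/2$ and, since $\alpha(s_*)\in(-\pi/2,-\pi/4)$ by Lemma~\ref{lem:3-11-1}, also $0<\cos\alpha(s_*)<\sqrt{2}/2$. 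Moreover, because $\dot{\alpha}>0$ throughout $(0,s_*)$ and $\dot{\alpha}(s_*)=0$, we must have $\ddot{\alpha}(s_*)\le 0$: otherwise $\dot{\alpha}$ would be strictly increasing at $s_*$ and hence strictly negative just to the left, contradicting positivity on $(0,s_*)$.

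The heart of the argument is an explicit computation of $\ddot{\alpha}(s_*)$. Differentiating \eqref{eq:3-11-3} once more, substituting \eqref{eq:3-11-1} and \eqref{eq:3-11-2} for $\dot r$ and $\dot\theta$, dropping the cross-terms killed by $\dot{\alpha}(s_*)=0$, and rewriting $\csc^2 2\theta=1+\cot^2 2\theta$ yields
\[
\ddot{\alpha}(s_*)=\frac{\cos\alpha(s_*)}{\sin^2 r(s_*)}\Bigl\{|\sin\alpha(s_*)|\bigl[(2n-3)+2(2n-2)\cot^2 2\theta(s_*)\bigr]-(2n-2)\cot 2\theta(s_*)\cos\alpha(s_*)\cos r(s_*)\Bigr\}.
\]
The prefactor is strictly positive, and the bounds $|\sin\alpha(s_*)|>\sqrt{2}/2$, $\cos\alpha(s_*)<\sqrt{2}/2$, $\cos r(s_*)\le 1$ estimate the braces below by $\tfrac{\sqrt{2}}{2}\bigl\{2(2n-2)\cot^2 2\theta(s_*)-(2n-2)\cot 2\theta(s_*)+(2n-3)\bigr\}$. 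Regarded as a quadratic in $t=\cot 2\theta(s_*)$, this polynomial has positive leading coefficient $2(2n-2)$ and discriminant $(2n-2)^2-8(2n-2)(2n-3)=-4(n-1)(7n-11)<0$ for every integer $n\ge 2$, and hence is strictly positive. Therefore $\ddot{\alpha}(s_*)>0$, contradicting the previous paragraph.

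I expect the main obstacle to be the bookkeeping in the differentiation step: one must carefully expand $\tfrac{d}{ds}[\cot 2\theta\cos\alpha/\sin r]$ and $\tfrac{d}{ds}[\cot r\sin\alpha]$, substitute the ODE, cancel the many terms proportional to $\dot{\alpha}(s_*)$, and reorganise via $\csc^2=1+\cot^2$ so that the sign question reduces to a quadratic in $\cot 2\theta(s_*)$ with an explicitly negative discriminant. The hypothesis $H\ge-\sqrt{2}(2n-3)/2$ does not enter the $\ddot{\alpha}$ route sketched above; presumably the authors instead use it in a direct algebraic argument that bounds $\dot{\alpha}(s_*)$ from below by combining Lemma~\ref{lem:3-11-3} with the estimate $|\sin\alpha(s_*)|>\sqrt{2}/2$ and the numerical identity $(2n-1)\tfrac{\sqrt{2}}{2}+H\ge\sqrt{2}$, but the $\ddot{\alpha}$ route bypasses that restriction.
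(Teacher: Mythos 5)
Your proposal is correct, and its skeleton coincides with the paper's: assume $\dot\alpha(s_*)=0$, invoke Lemma \ref{lem:3-11-5} to get $\alpha(s_*)<-\frac{\pi}{4}$, note $\ddot\alpha(s_*)\le 0$ by monotonicity of $\alpha$, and contradict this by computing $\ddot\alpha(s_*)$ from the ODE. Your second-derivative formula agrees with the paper's (after rewriting $\csc^2 2\theta=1+\cot^2 2\theta$; note the paper's displayed bracket has an evident typo, $(2n-1)$ where $(2n-2)$ should multiply $\cot 2\theta\cos\alpha\cos r$, and your version is the correct one). Where you genuinely diverge is the final positivity estimate. The paper controls the term $(2n-2)\cot 2\theta(s_*)\cos\alpha(s_*)\cos r(s_*)$ by Lemma \ref{lem:3-11-2}, which gives $(2n-2)\cot 2\theta\cos\alpha|_{s_*}\le -H$, and then absorbs the resulting $+H$ using the hypothesis $H\ge -\frac{\sqrt{2}}{2}(2n-3)$, reducing the bracket to $-\frac{\sqrt{2}}{2}(4n-4)(1-\csc^2 2\theta)>0$. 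You instead bound $\cos\alpha(s_*)\cos r(s_*)<\frac{\sqrt{2}}{2}$ directly and reduce everything to the quadratic $2(2n-2)t^2-(2n-2)t+(2n-3)$ in $t=\cot 2\theta(s_*)$, whose discriminant $-4(n-1)(7n-11)$ is indeed negative for all integers $n\ge 2$. Your route buys something concrete: it never uses the hypothesis $H\ge -\frac{\sqrt{2}}{2}(2n-3)$ (nor Lemma \ref{lem:3-11-2}), so it proves a slightly stronger statement; the paper's route is shorter at the final step but pays for it with the extra hypothesis on $H$ (which is harmless for the main theorem, since $\varepsilon$ is taken small anyway). Your closing speculation that the authors use $H\ge-\frac{\sqrt{2}}{2}(2n-3)$ in a "direct algebraic argument bounding $\dot\alpha(s_*)$ from below" is not quite right --- they use it exactly inside the $\ddot\alpha$ computation you carried out --- but this does not affect the validity of your proof.
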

	\begin{proof}
		Suppose \(\dot \alpha(\sst) =0\), then \(\alpha(\sst) \le -\frac{\pi}{4}\) and 
		\[
		(2n-2)\cot 2 \theta(\sst) \cos \alpha(\sst) \le -H
		\]
		by lemma \ref{lem:3-11-5} and lemma \ref{lem:3-11-2}.
		Since \(\dot \alpha(s)>0\) for \(s \in (0,\sst)\), we have \(\ddot{\alpha}(\sst) \le 0\).
		On the other hand, we have
		\begin{empheq}{align*}
			\ddot{\alpha}=&\frac{-(4n-4)\sin \alpha \cos \alpha}{\sin^2 r \sin^2 2 \theta}-\frac{(2n-2)\cos r \cos^2 \alpha}{\sin^2 r \tan 2 \theta}+(2n-1)\csc^2 r \cos \alpha \sin \alpha \\
			=& \frac{\cos \alpha}{\sin^2 r}\left[ \sin \alpha \left(-(4n-4)\csc^2 2 \theta +2n-1  \right)-(2n-1)\cot 2 \theta \cos \alpha \cos r \right] \\
			\ge& \frac{\cos \alpha}{\sin^2 r} \left[ \sin \alpha(-(4n-4)\csc^2 2 \theta+2n-1)+H \cos r \right]\\
			>& \frac{\cos \alpha}{\sin^2 r} \left[ \sin \alpha(-(4n-4)\csc^2 2 \theta+2n-1)+H  \right]\\
			\ge& \frac{\cos \alpha}{\sin^2 r} \left[ -\frac{\sqrt{2}}{2}(-(4n-4)\csc^2 2 \theta+2n-1+2n-3) \right]\\
			>&0
		\end{empheq}
		at \(\sst\). 
		We arrive at a contradiction, hence \(\dot \alpha(\sst) \ne 0\).
	\end{proof}	
	\begin{lemma}\label{lem:3-11-7}
		Let \(H<0\),\((r,\theta,\alpha):\,(0,\sst) \to B\) be a solution as in lemma \ref{lem:3-11-1}.
		Then 
		\[
		\lim_{r_0 \to \mathrm{arccot}\frac{-H}{2n-1}}r(\sst) = \mathrm{arccot}\frac{-H}{2n-1}.
		\]
	\end{lemma}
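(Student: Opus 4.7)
Set $r_* := \mathrm{arccot}\frac{-H}{2n-1}$, so that $(2n-1)\cot r_*+H=0$. My strategy is to exhibit the limiting orbit that corresponds formally to $r_0=r_*$, invoke continuous dependence of the ODE on initial data up to just before this limiting orbit hits the boundary $\theta=0$, and then control the remaining short terminal phase of the dynamics.

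First I would verify that the curve
\[
\bar r(s)\equiv r_*,\qquad \bar\theta(s):=\frac{\pi}{4}-\frac{s}{\sin r_*},\qquad \bar\alpha(s)\equiv -\frac{\pi}{2}
\]
solves \eqref{eq:3-11-1}-\eqref{eq:3-11-3} on $[0,\pi\sin r_*/4)$: equations \eqref{eq:3-11-1} and \eqref{eq:3-11-2} are immediate from $\cos(-\pi/2)=0$ and $\sin(-\pi/2)=-1$, while the right-hand side of \eqref{eq:3-11-3} collapses to $(2n-1)\cot r_*+H=0$ by the definition of $r_*$. This $\bar\gamma$ is the limit of the lemma \ref{lem:3-11-1} orbits as $r_0\to r_*^-$ and lives on the boundary $\alpha=-\pi/2$ of $B$.

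Next, fix $\varepsilon>0$ small and set $T:=\pi\sin r_*/4-\varepsilon$. On $[0,T]$, $\bar\gamma$ stays in a compact subset of the open region where the right-hand sides of \eqref{eq:3-11-1}-\eqref{eq:3-11-3} are smooth. By standard continuous dependence of ODE solutions on initial data, there exists $\rho>0$ such that for every $r_0\in(r_*-\rho,r_*)$ the maximal ODE solution starting at $(r_0,\pi/4,-\pi/2)$ is defined on $[0,T]$ and is within $\varepsilon$ of $\bar\gamma$ in every coordinate. If $s_*(r_0)\le T$, the lemma \ref{lem:3-11-1} orbit agrees with this ODE solution up to $s_*$, whence $|r(s_*)-r_*|<\varepsilon$.

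Otherwise $s_*(r_0)>T$, and at $s=T$ one has $|r(T)-r_*|<\varepsilon$, $\alpha(T)<-\pi/2+\varepsilon$, and $\theta(T)\le 2\varepsilon/\sin r_*$. The remaining task is to show $s_*-T=O(\varepsilon)$; since $\dot r=\cos\alpha\le 1$ this then gives $r(s_*)-r(T)=O(\varepsilon)$ and therefore $|r(s_*)-r_*|=O(\varepsilon)$. I would split $[T,s_*]$ at the first time $\tau_1$ with $\alpha(\tau_1)=-\pi/4$ (or set $\tau_1:=s_*$ if no such time exists). On $[T,\tau_1]$ one has $|\sin\alpha|\ge\sqrt{2}/2$ and $\sin r\le 1$, so $\dot\theta\le -\sqrt{2}/2$, giving $\tau_1-T\le\sqrt{2}\,\theta(T)=O(\varepsilon)$. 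On $[\tau_1,s_*]$ one has $\cos\alpha\ge\sqrt{2}/2$ and $\theta\le\theta(T)=O(\varepsilon)$, hence $\cot 2\theta\gtrsim 1/\varepsilon$, and equation \eqref{eq:3-11-3} forces $\dot\alpha\gtrsim 1/\varepsilon$ (this also rules out $\dot\alpha(s_*)=0$ via lemma \ref{lem:3-11-3}). Since $\alpha$ can increase by at most $\pi/4$ on this second piece before the orbit terminates, $s_*-\tau_1=O(\varepsilon)$. Letting $\varepsilon\to 0$ then yields the claimed limit.

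The main obstacle is the quantitative bound $s_*-T=O(\varepsilon)$ in the terminal phase. The heuristic is clear, namely that the dynamics becomes stiff as $\theta$ approaches $0$, but making it precise requires the two-piece splitting at $\alpha=-\pi/4$ and the explicit estimates sketched above.
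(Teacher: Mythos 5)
Your argument is correct, and it reaches the conclusion by a genuinely different route from the paper's. Both proofs start from the same degenerate orbit $(r_*,\tfrac{\pi}{4}-s/\sin r_*,-\tfrac{\pi}{2})$ and from continuous dependence on the initial value (the paper phrases this as convergence on compacta along a sequence $\rho_n\uparrow a(H)$ inside a contradiction argument), and both exploit the same mechanism, namely that once $\theta$ is small the term $(2n-2)\cot 2\theta\cos\alpha/\sin r$ dominates $\dot\alpha$. The difference lies in the endgame. The paper supposes $\limsup r(s_*)>a(H)$, fixes an intermediate level $R$, applies the Cauchy mean value theorem to $\theta$ versus $r$ between two $r$-levels to conclude that $\alpha_n$ is already near $0$ before $r_n$ reaches $\frac{a(H)+2R}{3}$, and then integrates the inequality $\dot\alpha_n>\frac{3\pi}{2(R-a(H))}\,\dot r_n$ to force $\alpha$ to increase by more than $\pi/2$, impossible since $\alpha\in(-\tfrac{\pi}{2},0]$; it thus compares $d\alpha/dr$ directly and never needs to estimate the terminal time. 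You instead bound the remaining \emph{time} $s_*-T=O(\varepsilon)$ by splitting at $\alpha=-\tfrac{\pi}{4}$ (the same device as in lemma \ref{lem:3-11-4}): the first phase is short because $\dot\theta\le -\tfrac{\sqrt2}{2}$ while $\theta(T)=O(\varepsilon)$ and $\theta$ must stay positive, the second because $\dot\alpha\gtrsim 1/\varepsilon$ while $\alpha$ can rise by at most $\tfrac{\pi}{4}$; you then convert to a bound on $r$ via $\dot r\le 1$. Your version is more quantitative, yielding an explicit modulus $|r(s_*)-r_*|=O(\varepsilon)$ rather than a bare limit, at the cost of extra bookkeeping; the paper's is softer. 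Your write-up is complete once two small points are made explicit: the restricted orbit of lemma \ref{lem:3-11-1} coincides, by ODE uniqueness, with the full solution used in the continuous-dependence step (so the case $s_*\le T$ is legitimate), and the implied constants in your $O(\varepsilon)$ estimates depend only on $n$, $H$ and $r_*$, not on $r_0$.
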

	\begin{proof}
		For simplicity, we denote \(\mathrm{arccot}\frac{-H}{2n-1}\) by \(a(H)\).
		Let us first note that \(\liminf_{r_0 \to a(H)} \ge a(H)\), so it suffice to prove that \(\limsup_{r_0 \to a(H)} \le a(H)\).
		We adopt a contradiction argument.
		Suppose \(\limsup_{r_0 \to \ah}r(\sst)>\ah \) and let \(R\) be in the interval \[(\ah, \limsup_{r_0 \to \ah }r(\sst)),\]then there exists a sequence \(\left\{ \rho_n \right\}_{n=1}^\infty\) increasingly converging to \(\ah\) so that \(r_n(\sst)>R\).
		We use the notation \((r_n(s),\theta_n(s),\alpha_n(s))\) to denote the solution of system \eqref{eq:3-11-1},\eqref{eq:3-11-2},\eqref{eq:3-11-3} with initial data \((r_n(0),\theta_n(0),\alpha_n(0))=(\rho_n,\frac{\pi}{4},-\frac{\pi}{2})\).
		Now for each \(n\), there are \(s_{1,n},\ s_{2,n}\) in \((0,\sst)\) such that 
		\[
		r_n(s_{1,n})=\frac{2\ah+R}{3},\ \ r_n(s_{2,n})=\frac{\ah+2 R}{3}.
		\]
		Since \((r_n(s),\theta_n(s),\alpha_n(s))\) converges to \((\ah,-\frac{1}{\sin a(H)}s +\frac{\pi}{4},-\frac{\pi}{2})\) on compacta as \(n \to \infty\), we see that 
		\begin{equation}\label{eq:3-11-8}
			\lim_{n \to \infty}\theta_n(s_{1,n})=0,\ \ \lim_{n \to \infty}\theta_n(s_{2,n})=0.
		\end{equation}
		For each \(n\), there is a \(\xi_n \in(s_{1,n},s_{2,n})\) such that 
		\[
		\frac{\theta_n(s_{2,n})-\theta_n(s_{1,n})}{r_n(s_{2,n})-r_n(s_{1,n})}=\frac{\sin \alpha_n(\xi_n )}{\cos \alpha_n(\xi_n )\sin r_n(\xi_n)}.
		\]
		by Cauchy intermediate theorem.
		Since \(\lim_{n \to \infty}\theta_n(s_{i,n})=0\), \(i=1,2\), we obtain that 
		\begin{equation}\label{eq:3-11-9}
			\lim_{n \to \infty}\alpha_n(\xi_n)=0.
		\end{equation}
		Combining \eqref{eq:3-11-8}, \eqref{eq:3-11-9} with that \(\dot \theta_n<0,\ \dot \alpha_n>0\) for every \(n\) and \(s \in (0,\sst)\), we obtain that 
		\[
		\frac{(n-1)\cot 2 \theta_n \cos \alpha_n }{\sin r_n}+H>0
		\]
		and 
		\[
		\frac{(n-1)\cot 2 \theta_n }{\sin r_n}> \frac{3 \pi}{2(R-\ah)}
		\]
		for large \(n\) and \(s \in [s_{2,n},\sst)\).
		It follows that 
		\[
		\dot \alpha_n > \frac{3 \pi}{2(R-\ah)}\dot r 
		\]
		for large \(n\) and \(s \in [s_{2,n},\sst)\).
		Integrating from \(s_{2,n}\) to \(\sst\) gives that 
		\begin{empheq}{align*}
			\alpha_n(\sst)-\alpha_n(s_{2,n})>&\, \frac{3 \pi}{2(R-\ah)}(r_n(\sst)-r_n(s_{2,n}))\\
			>&\, \frac{3 \pi}{2(R-\ah)}\left( R-\frac{\ah +2R}{3}
			\right)\\
			=&\, \frac{\pi}{2},
		\end{empheq}
		which is impossible.
		Hence \(\liminf_{r_0 \to \ah}r(\sst)= \limsup_{r_0 \to \ah}r(\sst)=\ah \), namely, \(\lim_{r_0 \to \ah}r(\sst)=\ah\).
	\end{proof}
	\begin{lemma}\label{lem:3-12-1}
		Let \((r,\theta,\alpha):\,(0,\sst) \to B\) be a solution as in lemma \ref{lem:3-11-1}.
		There exists \(0<r_0'<r_0''<\frac{\pi}{2}\) and \(0<\delta<(2n-1) \cot r_0''\) such that for \(-\delta \le H \le 0\), one has \(\alpha(\sst)=0\) with \(r_0=r_0'\) and \(r(\sst)=\frac{\pi}{2}\) with \(r_0=r_0''\).
	\end{lemma}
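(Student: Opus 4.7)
The plan is a shooting argument at $H=0$ combined with continuous dependence of ODE solutions on the parameter $H$. We shall first exhibit two values $r_0^{(1)} < r_0^{(2)}$ in $(0, \pi/2)$ at which the flow of \eqref{eq:3-11-1}--\eqref{eq:3-11-3} with $H = 0$ produces the two distinct exit behaviors ($\alpha(s_*) = 0$ and $r(s_*) = \pi/2$ respectively), and then argue by continuity that these persist for $H$ in a small one-sided neighborhood of $0$.

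For the first value, we exploit that $\dot\alpha(0) = (2n-1)\cot r_0 + H$ blows up as $r_0 \to 0$. Using the lower bound $\dot\alpha \ge -(2n-1)\cot r \sin\alpha + H$ (which follows from the non-negativity of $(2n-2)\cot 2\theta \cos\alpha/\sin r$ on $B$), one shows that for $r_0$ sufficiently small and $|H|$ bounded, $\alpha$ is driven from $-\pi/2$ up to $0$ within a time of order $r_0$, during which $r$ can grow by at most $O(r_0)$; hence the exit is $\alpha(s_*) = 0$ with $r(s_*) < \pi/2$, furnishing $r_0^{(1)}$. For the second, we invoke the minimal hypertorus construction of Carlotto and Schulz (Theorem~\ref{thm:2025-03-20-1}): at $H = 0$, the axial-symmetry reduction produces a trajectory of \eqref{eq:3-11-1}--\eqref{eq:3-11-3} starting from some $r_0^{CS} \in (0, \pi/2)$ whose terminal time $s_*$ corresponds to the corner $r(s_*) = \pi/2$ and $\alpha(s_*) = 0$ simultaneously, so that the reflection symmetries of Lemma~\ref{lem:2025-03-21-1} close the profile curve up. Perturbing $r_0$ to a value slightly larger than $r_0^{CS}$ resolves the corner in one direction: $r$ reaches $\pi/2$ before $\alpha$ reaches $0$, so the exit becomes $r(s_*) = \pi/2$ with $\alpha(s_*) < 0$. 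This yields $r_0^{(2)} > r_0^{CS}$, and shrinking the perturbation if necessary we may assume $r_0^{(1)} < r_0^{(2)}$.

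Both exit behaviors are transversal. At an exit of type $\alpha(s_*) = 0$, the hypothesis of Lemma~\ref{lem:3-11-2} is satisfied at some $s_0 \in (0, s_*)$ (for $H = 0$ it holds for every $s_0 > 0$ by the strict positivity of $(2n-2)\cot 2\theta \cos\alpha$ on $B$ away from $s=0$, and for small $|H|$ it holds close enough to $s_*$ by continuity), giving $\dot\alpha(s_*) > 0$. At an exit of type $r(s_*) = \pi/2$, $\dot r(s_*) = \cos\alpha(s_*) > 0$ since $\alpha(s_*) \in (-\pi/2, 0)$. Consequently the sets of parameters producing each exit type are open in $(r_0, H)$-space by continuous dependence of the ODE on initial data and parameters. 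Setting $r_0' := r_0^{(1)}$ and $r_0'' := r_0^{(2)}$, there exists $\delta_0 > 0$ so that for every $H \in [-\delta_0, 0]$ the exit at $r_0 = r_0'$ remains $\alpha(s_*) = 0$ and the exit at $r_0 = r_0''$ remains $r(s_*) = \pi/2$. Finally take $\delta := \min(\delta_0, \tfrac{1}{2}(2n-1)\cot r_0'')$: this satisfies $\delta < (2n-1)\cot r_0''$ and ensures $r_0'' < \mathrm{arccot}(-H/(2n-1))$ for all $H \in [-\delta, 0]$, so the hypothesis of Lemma~\ref{lem:3-11-1} is met at $r_0 = r_0''$ throughout this range.

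The main obstacle is making the perturbation argument of Step~2 precise: starting from the single Carlotto-Schulz corner trajectory at $H = 0$ where both $r = \pi/2$ and $\alpha = 0$ are attained simultaneously, one must show that small changes of $r_0$ select exactly one of these two boundaries to be hit first. Intuitively the two quantities $\pi/2 - r(s_*)$ and $-\alpha(s_*)$ cross zero at slightly different shooting parameters, and the sign of the perturbation chooses which; formalizing this requires either a transversality or implicit-function argument at the corner trajectory, or an explicit dynamical comparison of the $(r,\alpha)$ phase portrait for nearby shooting parameters.
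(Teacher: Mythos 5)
Your overall skeleton --- exhibit the two exit behaviours at $H=0$ and then propagate them to $H\in[-\delta,0]$ by continuous dependence on the parameter together with transversality of each exit --- is exactly the paper's proof; the paper simply \emph{cites} Carlotto--Schulz for the existence of $r_0'$ and $r_0''$ at $H=0$ and then invokes parameter dependence. Your continuity step, the openness argument via $\dot\alpha(s_*)>0$ (from Lemma \ref{lem:3-11-2}) and $\dot r(s_*)>0$, and the final bookkeeping $\delta<(2n-1)\cot r_0''$ guaranteeing $r_0''<\mathrm{arccot}\frac{-H}{2n-1}$ are all fine and are more explicit than what the paper writes.

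The gap is in your construction of $r_0''$ at $H=0$, and it is a genuine one, as you yourself flag. Carlotto--Schulz obtain the corner trajectory (where $r(s_*)=\pi/2$ and $\alpha(s_*)=0$ simultaneously) \emph{as a consequence} of first exhibiting, by direct ODE estimates, one initial value whose exit is $\alpha(s_*)=0$ with $r(s_*)<\pi/2$ and another whose exit is $r(s_*)=\pi/2$ with $\alpha(s_*)<0$, and then applying a connectedness argument. You invert this: you take their theorem as a black box to produce the corner trajectory and then try to perturb off it to recover the $r(s_*)=\pi/2$ exit. Openness of the two exit-type sets only tells you that the corner parameter lies on their common boundary; it does not tell you that a one-sided perturbation of $r_0$ lands in the $r(s_*)=\pi/2$ set rather than back in the $\alpha(s_*)=0$ set (a priori the corner could even be an accumulation point of corner parameters). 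Filling this requires precisely the transversality/monotonicity analysis you defer, which is at least as hard as the direct estimate it is meant to replace. Your first step has a milder version of the same problem: the comparison $\dot\alpha\ge-(2n-1)\cot r\sin\alpha$ integrates to $|\sin\alpha(s)|\le(\sin r_0/\sin r(s))^{2n-1}$, which shows $\alpha$ gets \emph{close} to $0$ while $r$ is still small but degenerates as $\alpha\to 0^-$ and so does not by itself force $\alpha$ to \emph{reach} $0$ before $r$ reaches $\pi/2$; one must bring in the $\cot 2\theta$ term (or another mechanism) to close that case. The cleanest repair is to do what the paper does: quote the relevant lemmas of Carlotto--Schulz, which establish both exit behaviours at $H=0$ directly, rather than their existence theorem.
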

	\begin{proof}
		For \(H=0\), Carlotto and Schulz \cite{CS} proved that there exists \(0<r_0'<r_0''<\frac{\pi}{2}\) such that \(\alpha(\sst)=0\) with \(r_0=r_0'\) and \(r(\sst)=\frac{\pi}{2}\) with \(r_0=r_0''\).
		The lemma follows by the parameter dependence of the system \eqref{eq:3-11-1}, \eqref{eq:3-11-2}, \eqref{eq:3-11-3} on the parameter \(H\). 
	\end{proof}
	\begin{proposition}\label{prop:2025-03-12-1}
		Let \(r_0',\ r_0'',\ \delta\) be as in lemma \ref{lem:3-12-1},
		\[
		- \min \left\{\frac{\sqrt{2} \sin \frac{r_0'}{2}}{12\left(1-\sin \frac{r_0'}{2}\right)},\, \delta,\, \sqrt{2}(n-1)\tan \frac{\pi}{4(2n-1)}\right\} \le H <0
		\]
		and \((r,\theta,\alpha):\,(0,\sst) \to B \) be a solution as in lemma \ref{lem:3-11-1}.
		Then there exists \(r_0'< \varrho_1<r_0''\) and \(r_0''<\varrho_2< \mathrm{arccot}\frac{-H}{2n-1}\) such that \(\alpha(\sst)=0,\ r(\sst)=\frac{\pi}{2}\) with initial value \(r_0=\varrho_i,\ i=1,2\).
	\end{proposition}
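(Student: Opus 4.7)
The approach is to reduce the proposition to two applications of the intermediate value theorem using a carefully chosen continuous function of the initial radius $r_0$. Under the imposed bound on $|H|$, for every $r_0 \in [r_0', \mathrm{arccot}\frac{-H}{2n-1})$, the hypotheses of Lemmas \ref{lem:3-11-5} and \ref{lem:3-11-6} are satisfied: the three terms in the minimum guarantee respectively that $3\sqrt{2}H(1-1/\sin r_0)<1$ uniformly on this range, compatibility with Lemma \ref{lem:3-12-1}, and the trigonometric inequality. Consequently $\dot\alpha(s_*)>0$, so by Lemma \ref{lem:3-11-1} the right maximal time $s_*(r_0)$ is reached via either $r(s_*)=\pi/2$ or $\alpha(s_*)=0$ (possibly both), always with $\theta(s_*)>0$.

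For each admissible $r_0$, set
\[
\phi(r_0) := \tfrac{\pi}{2}-r(s_*(r_0);r_0) \ge 0, \qquad \psi(r_0) := -\alpha(s_*(r_0);r_0) \ge 0.
\]
Since the vector field defining \eqref{eq:3-11-1}--\eqref{eq:3-11-3} is smooth and the solution exits $B$ transversally (because $\dot r=\cos\alpha>0$ on the face $\{r=\pi/2\}$ and $\dot\alpha(s_*)>0$ on $\{\alpha=0\}$), the map $r_0\mapsto s_*(r_0)$ is continuous, and hence so are $\phi$ and $\psi$. Moreover, by the exit alternative noted above, at every such $r_0$ at least one of $\phi(r_0),\psi(r_0)$ vanishes.

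The boundary data come from earlier results: Lemma \ref{lem:3-12-1} gives $\psi(r_0')=0<\phi(r_0')$ and $\phi(r_0'')=0<\psi(r_0'')$, while Lemma \ref{lem:3-11-7} ensures $r(s_*;r_0)\to \mathrm{arccot}\frac{-H}{2n-1}<\pi/2$ as $r_0\uparrow \mathrm{arccot}\frac{-H}{2n-1}$, so one can fix $\tilde r_0\in(r_0'',\mathrm{arccot}\frac{-H}{2n-1})$ with $\phi(\tilde r_0)>0$, forcing $\psi(\tilde r_0)=0$. Applying the intermediate value theorem to the continuous function $F(r_0):=\phi(r_0)-\psi(r_0)$, which satisfies $F(r_0')>0$, $F(r_0'')<0$ and $F(\tilde r_0)>0$, yields points $\varrho_1\in(r_0',r_0'')$ and $\varrho_2\in(r_0'',\tilde r_0)$ with $\phi(\varrho_i)=\psi(\varrho_i)$. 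At each $\varrho_i$ one of $\phi,\psi$ must vanish, hence both do, giving simultaneously $r(s_*)=\pi/2$ and $\alpha(s_*)=0$ as required.

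The principal technical point, and the one I expect to be most delicate, is verifying that the specific bound $|H|\le \frac{\sqrt{2}\sin(r_0'/2)}{12(1-\sin(r_0'/2))}$ really does propagate the analytic hypotheses of Lemmas \ref{lem:3-11-5} and \ref{lem:3-11-6} uniformly for $r_0$ ranging throughout $[r_0',\mathrm{arccot}\frac{-H}{2n-1})$; this requires exploiting $\sin r_0\ge \sin r_0'$ on this range together with monotone dependence of the relevant error terms on $|H|$, and is a careful bookkeeping of constants rather than a conceptual difficulty. Everything else reduces to smooth dependence on initial data plus two applications of the intermediate value theorem.
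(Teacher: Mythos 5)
Your proposal is correct and follows essentially the same route as the paper: rule out the exit $\dot\alpha(\sst)=0$ via Lemma \ref{lem:3-11-6}, pin down the exit type at $r_0'$, at $r_0''$, and near $\mathrm{arccot}\frac{-H}{2n-1}$ via Lemmas \ref{lem:3-12-1} and \ref{lem:3-11-7}, and conclude by continuous dependence on the initial value. The only difference is presentational: you make the final continuity step explicit with the intermediate value theorem applied to $\phi-\psi$, whereas the paper delegates that step to the argument in Carlotto--Schulz.
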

	\begin{proof}
		Lemma \ref{lem:3-11-6} shows that \(\dot\alpha(\sst) \ne 0\) for any \(r_0 \in(\frac{r_0'}{2},\, \mathrm{arccot}\frac{-H}{2n-1})\).
		Hence \(\dot \alpha(s)>0\), $s\in [0,\sst]$ for any \(r_0 \in(\frac{r_0'}{2}, \mathrm{arccot}\frac{-H}{2n-1})\).
		Note that \(r_0',\,r_0'' \in(\frac{r_0'}{2},\mathrm{arccot}\frac{-H}{2n-1})\).
		Lemma \ref{lem:3-12-1} shows that \(r(\sst)<\frac{\pi}{2},\ \alpha(\sst)=0\) for \(r_0=r_0'\) and \(r(\sst)=\frac{\pi}{2},\ \alpha(\sst)<0\) for \(r_0=r_0''\).
		Moreover, lemma \ref{lem:3-11-7} shows that \(r(\sst)<\frac{\pi}{2},\ \alpha(\sst)=0\) for \(r_0\) close to \(\mathrm{arccot}\frac{-H}{2n-1}\).
		Now the lemma follows by the initial value dependence of the system \eqref{eq:3-11-1}, \eqref{eq:3-11-2}, \eqref{eq:3-11-3}, also see the proof of theorem 1.1 in \cite{CS}.
	\end{proof}
	\noindent {\it Proof of theorem \ref{thm:2025-03-21-1}.} Let \(r_0',\ \delta\) be as in lemma \ref{lem:3-12-1} and
	\[
	\varepsilon := \min \left\{\frac{\sqrt{2} \sin \frac{r_0'}{2}}{12\left(1-\sin \frac{r_0'}{2}\right)},\, \delta,\, \sqrt{2}(n-1)\tan \frac{\pi}{4(2n-1)}\right\}.
	\]
	Given \(-\varepsilon < H <0\), let \(\varrho_1,\,\varrho_2\) be as in proposition \ref{prop:2025-03-12-1} and \((r_i(s),\theta_i(s), \alpha_i(s))\) be the solutions of \eqref{eq:3-11-1}, \eqref{eq:3-11-2}, \eqref{eq:3-11-3} with initial value \((r_i(0),\theta_i(0),\alpha_i(0)) = (\varrho_i, \frac{\pi}{4},-\frac{\pi}{2}),\ i=1,2\).	
	By lemma \ref{lem:2025-03-21-1}, one sees that the curves \((r_i(s),\theta_i(s))\) are smooth simple closed curves.
	Hence they generate two CMC embeddings of \(\mathbb{S}^{n-1} \times \mathbb{S}^{n-1} \times \mathbb{S}^1\) in \(\mathbb{S}^{2n}(1)\) which have same negative mean curvature \(H\). \qed

	Figure \ref{fig:4.3} shows two distinct simple closed profile curves (red and green) of two $3$-dimensional hypertori sharing the same mean curvature \(H=-0.2\).
	\begin{figure}[htbp]
		\centering
		\includegraphics[scale=0.5]{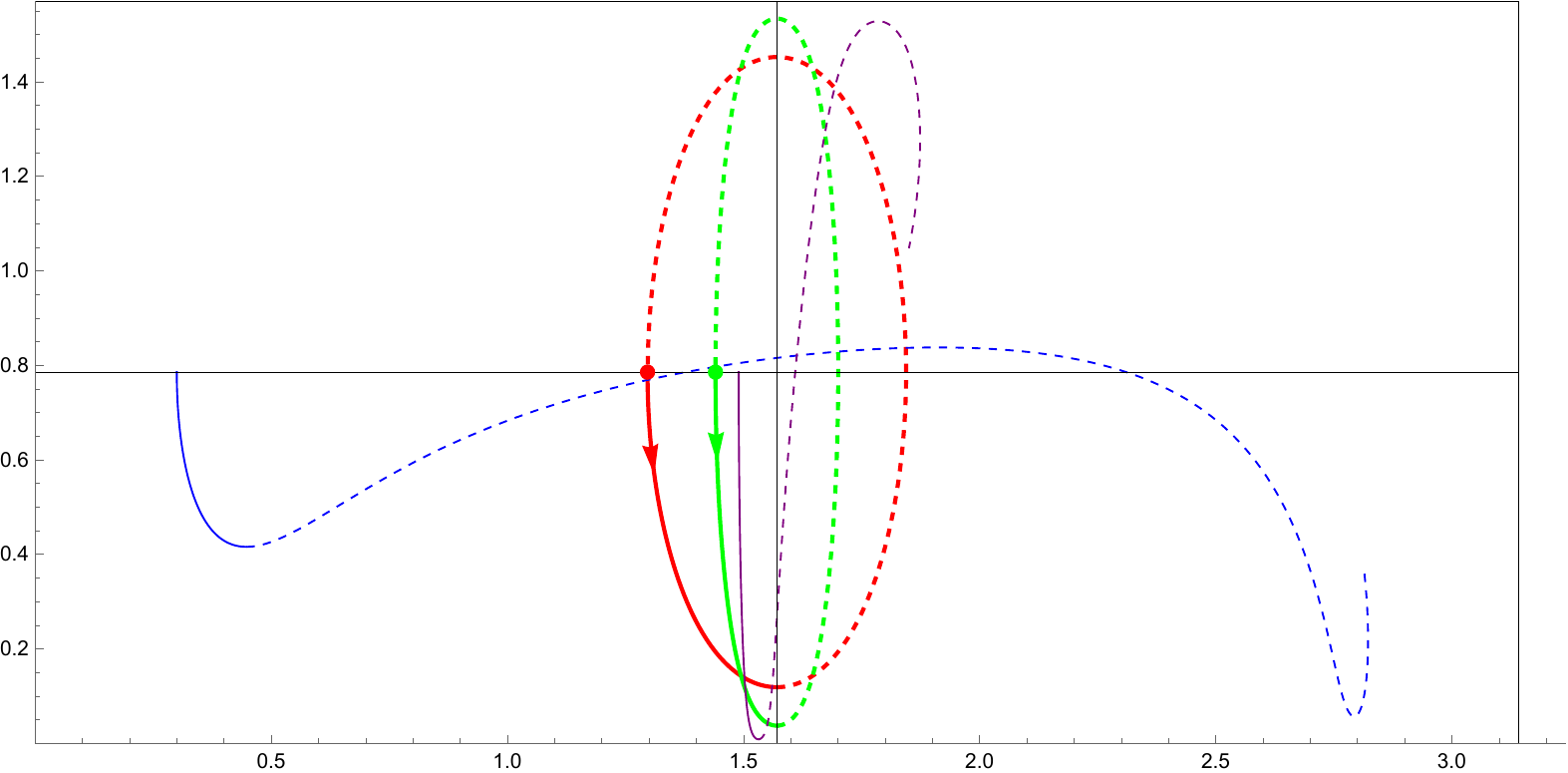}
		\caption{\ \ \ Profile curves with \(n=2\) and \(H=-0.2\), where the initial value \(r_0\) of the red curve is 1.29691 and that of green curve is 1.44086.}
		\label{fig:4.3}
	\end{figure}

\end{document}